\documentclass[12pt]{article}
\usepackage{graphicx} 
\usepackage{epstopdf}
\usepackage{subfigure}
\usepackage{color}
\usepackage[english]{babel}
\usepackage{amsmath,amsthm}
\usepackage{amssymb}
\usepackage{amsfonts}
\usepackage{booktabs}

\usepackage{graphicx}
\usepackage{subfigure}
\usepackage{tabularx}
\DeclareGraphicsRule{.wmf}{bmp}{}{}

\textwidth 17.4cm \textheight 23.5cm \topmargin -1.5 cm \hoffset -1.7 cm
\voffset 0cm




\newtheorem{thm}{Theorem}[section]
\newtheorem{defi}{Definition}[section]


\newtheorem{lem}[thm]{Lemma}
\newtheorem{coro}[thm]{Corollary}

%

\theoremstyle{plain}

\title{The Alon-Tarsi number of Halin graphs}

\author { Zhiguo Li{$^*$}, Qing Ye, Zeling Shao\\
{\small School of Science, Hebei University of Technology, Tianjin 300401, China}
\date{}
\footnote{Corresponding author. E-mail: zhiguolee@hebut.edu.cn}
\footnote{This work was supported in part by the Key Projects of Natural Science Research in Colleges and universities of Hebei Province, China (No.ZD2020130) and the Natural Science Foundation of Hebei Province, China (No. A2021202013). }
}

\begin{document}
\baselineskip 0.65cm

\maketitle

\begin{abstract}
  The \emph{Alon-Tarsi number} of a graph $G$ is the smallest $k$ for which there is an orientation $D$ of $G$ with max outdegree $k-1$ such that the number of Eulerian subgraphs of $G$ with an even number of edges differs from the number of Eulerian subgraphs with an odd number of edges. In this paper, we obtain the Alon-Tarsi number of a Halin graph equals 4 when it is a wheel of even order and 3 otherwise.

\bigskip
\noindent\textbf{Keywords:} Alon-Tarsi number; list chromatic number; chromatic number; Halin graph\\

\noindent\textbf{2000 MR Subject Classification.} 05C15
\end{abstract}

\section{Introduction}

All graphs considered in this article are finite, and all graphs are either simple graphs or simple directed graphs.
A $k$-$list$ $assignment$ of a graph $G$ is a mapping $L$ which assigns to each vertex $v$ of $G$ a set $L(v)$ of $k$ permissible colors. Given a $k$-list assignment $L$ of $G$, an $L$-$coloring$ of $G$ is a mapping $\phi$ which assigns to each vertex $v$ a color $\phi(v) \in L(v)$ such that $\phi(u) \neq \phi(v)$ for every edge $e = uv$ of $G$.
A graph $G$ is $k$-$choosable$ if $G$ has an $L$-coloring for every $k$-list assignment $L$.
The \emph{choice number} of a graph $G$ is the least positive integer $k$ such that $G$ is $k$-choosable, denoted by $ch(G)$.


In the classic article[1], Alon and Tarsi have obtained the upper bound for the choice number of graphs by applying algebraic techniques, which was later called the Alon-Tarsi number of $G$, and denoted by $AT(G)$ (See e.g. Jensen and Toft (1995) [2]). They have transformed the computation of the Alon-Tarsi number of $G$ from algebraic manipulations to the analysis of the structural properties of $G$. Their characterization can be restated as follows. The \emph{Alon-Tarsi number} of $G$, $AT(G)$, is the smallest $k$ for which there is an orientation $D$ of $G$ with max outdegree $k-1$ such that the number of odd Eulerian subgraphs of $G$ is not the same as the number of even Eulerian subgraphs of $G$.

As pointed out by Hefetz [3], the Alon-Tarsi number has some different features and we are interested in studying $AT(G)$ as an independent graph invariant. Let $\chi_{p}(G)$ be the paint number of $G$ [4]. In [5], U. Schauz generalizes the result of Alon and Tarsi [1]: $ch(G)\leq \chi_{p}(G)\leq AT(G)$ for any graph $G$ and the equalities are not hold in general. Nevertheless, it is also known that the upper bounds of the choice number and the Alon-Tarsi number are the same for several graph classes. For example, In [6], Thomassen has shown that the choice number of any planar graph is at most 5, and it was proved by Schauz in [7] that every planar graph $G$ satisfies $\chi_{p}(G)\leq 5$. As a strengthening of the results of Thomassen and Schauz, X. Zhu proves that every planar graph $G$ has $AT(G)\leq 5$ by Alon-Tarsi polynomial method and $AT$-orientation method [8]. It is of interest to find graph $G$ for which these parameters are equal.

Furthermore, Grytczuk and X. Zhu have used polynomial method to prove that every planar graph $G$ has a matching $M$ such that $AT(G-M)\leq 4$ in [9], it implies a positive answer to the more difficult question $-$ whether every planar graph is 1-defective 4-choosable [10]. Let $T_{m,n}=C_m\Box C_n$ be a toroidal grid, the first author et al. use the same method to show that the Alon-Tarsi number of  $T_{m,n}$ equals $4$  when $m,n$ are both odd and $3$ otherwise in [11]. T. Abe et al. prove that for a $K_5$-minor-free graph $G$, $AT(G)\leq 5$ [12], which generalizes the result of X. Zhu [8].

A $Halin$ graph $H = T \cup C_{n} $ is a plane graph, where $T$ is a tree with no vertex of degree two and at least one vertex of degree three or more, and $C_{n}$ is a cycle connecting the pendant vertices of $T$ in the cyclic order determined by the drawing of $T$. Vertices of $C_{n}$ and $H-C_{n}$ are referred to as $outer$ and $inner$ vertices of $H$, respectively.
In particular, a $wheel$ graph is  a Halin graph which  contains only one inner vertex. In a $wheel$ graph, if we delete an edge of $C_{n}$, the rest of the graph is called a $fan$.


The chromatic number and choice number of Halin graphs are determined in [13] and [14], respectively.
In this article, we obtain the exact values of Alon-Tarsi number of Halin graphs by constructing an $AT$-orientation method.\\

{\bf Main Theorem.}
For a $Halin$ $graph$ $H$, we have
$$AT(H)=\left\{\begin{array}{l}
4,\ \ \ \text{if}~H~\text{is\ a\ wheel\ of\ even\ order};\\
3,\ \ \ \mbox{otherwise}.\\
\end{array} \right.$$


\section{Preliminaries}

\begin{defi} {[1]}
A subdigraph $H$ of a directed graph $D$ is called Eulerian  if $V(H)=V(G)$ and the indegree $d^{-}_{H}(v)$  of every vertex
$v$ of $H$  in $H$ is equal to its outdegree $d^{+}_{H}(v)$. Note that $H$ might not be connected.
 For a digraph $D$, we denote by $\mathcal{E}(D)$ the family of Eulerian subdigraphs of $D$.
 $H$ is even if it has an even number of edges, otherwise, it is odd. Let $\mathcal{E}_{e}(D)$ and $\mathcal{E}_{o}(D)$  denote the
numbers of even and odd Eulerian subgraphs of D, respectively. Let {\rm diff}$(D)=|\mathcal{E}_{e}(D)|-|\mathcal{E}_{o}(D)|$. We say that $D$ is $Alon$-$Tarsi$ if {\rm diff}$(D)\neq0$. If an $orientation$ $D$ of $G$ yields an $Alon$-$Tarsi$ $digraph$, then we say $D$ is an $Alon$-$Tarsi$ $orientation$ $($or an $AT$-$orientation$, for short$)$ of $G$.
\end{defi}

%

Generally, it is difficult to determine whether an orientation $D$ of a graph $G$ is an  AT-orientation.
 Nevertheless, in some cases this problem is very simple.
 Observe that every digraph $D$ has at least one even Eulerian subdigraph, namely, the empty subgraph. If $D$ has no odd directed cycle, then $D$ has no odd Eulerian subdigraph, so $D$ is an $AT$-orientation.

 An\emph{ acyclic orientation} of an undirected graph is an assignment of a direction to each edge (an orientation) that does not form any directed cycle and therefore makes it into a directed acyclic graph.  If $D$ is an acyclic orientation of $G$, then $D$ has no odd Eulerian subdigraph, so $D$ is an $AT$-orientation.
 We denote the maximum outdegree of an acyclic orientation $D$ by $d_{a}$. By the definition of $AT(G)$ we have the following:
 \begin{lem}
 If a graph $G$ has an acyclic orientation $D$ with maximum outdegree $d_a$, then $AT(G)\leq d_a +1$.
 \end{lem}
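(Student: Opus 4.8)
The statement is essentially a bookkeeping consequence of the observations made in the paragraph immediately preceding it, so the plan is short. The one structural fact I would isolate and record is the following: \emph{every nonempty Eulerian subdigraph of a digraph contains a directed cycle.} Indeed, start at any vertex incident to an edge of the subdigraph; since indegree equals outdegree at every vertex and the subdigraph has an edge, one can keep following outgoing edges and, the vertex set being finite, eventually revisit a vertex, producing a closed directed walk and hence a directed cycle.

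Given this, I would argue as follows. Let $D$ be an acyclic orientation of $G$ with maximum outdegree $d_a$. Since $D$ has no directed cycle, the fact above forces the empty subgraph to be the \emph{only} element of $\mathcal{E}(D)$. The empty subgraph has $0$ edges, hence is even, so $|\mathcal{E}_e(D)|=1$ and $|\mathcal{E}_o(D)|=0$, giving ${\rm diff}(D)=1\neq 0$. Thus $D$ is an $AT$-orientation of $G$, and its maximum outdegree is $d_a$.

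Finally I would invoke the definition of $AT(G)$ as the smallest $k$ for which $G$ admits an $AT$-orientation of maximum outdegree $k-1$: taking $k=d_a+1$ and the orientation $D$ just produced shows $AT(G)\le d_a+1$, as claimed.

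\textbf{Main obstacle.} There is essentially none here — the lemma is a direct corollary of the preceding discussion together with the definition of $AT(G)$. The only point that deserves a one-line justification is the claim that an acyclic digraph has no nonempty Eulerian subdigraph at all (not merely no \emph{odd} one), which is what lets us conclude ${\rm diff}(D)\neq 0$ rather than just the vanishing of the odd count.
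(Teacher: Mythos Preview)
Your proposal is correct and matches the paper's treatment: the paper does not give a separate proof of this lemma but derives it immediately from the preceding paragraph (an acyclic $D$ has no odd Eulerian subdigraph, the empty subgraph is even, hence $D$ is an $AT$-orientation) together with the definition of $AT(G)$. Your version is simply a more detailed write-up of that same reasoning, with the minor variant that you show an acyclic $D$ has no \emph{nonempty} Eulerian subdigraph whatsoever (yielding ${\rm diff}(D)=1$ explicitly) rather than only ruling out the odd ones; either route suffices.
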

\begin{defi} {[15]}
 A graph $G$ is $k$-degenerate if there exists an ordering $v_{1}, \ldots, v_{n}$ of vertices of $G$ such that for $i=$ $1, \ldots, n$, the vertex $v_{i}$ has at most $k$ neighbors among $v_{1}, v_{2}, \ldots, v_{i-1}$.
\end{defi}

\begin{lem}
If a graph $G$ is $k$-degenerate, then $AT(G)\leq k+1$.
\end{lem}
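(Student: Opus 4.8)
The plan is to reduce the statement to Lemma 1 (the acyclic orientation bound) by turning a degeneracy ordering into an acyclic orientation whose maximum outdegree is at most $k$. Concretely, let $v_1, v_2, \dots, v_n$ be an ordering of $V(G)$ witnessing that $G$ is $k$-degenerate, so that each $v_i$ has at most $k$ neighbors among $v_1, \dots, v_{i-1}$. I would orient every edge $v_iv_j$ (with $i<j$) from $v_j$ to $v_i$, i.e.\ from the later vertex in the ordering to the earlier one. Call this orientation $D$.

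The first key step is to check that $D$ is acyclic. This is immediate: along any directed path in $D$ the index of the vertices is strictly decreasing, so no directed cycle can close up. The second key step is to bound the maximum outdegree. The out-neighbors of $v_j$ in $D$ are exactly the neighbors $v_i$ of $v_j$ with $i<j$ — that is, the ``back-neighbors'' of $v_j$ in the degeneracy ordering — and by the definition of $k$-degeneracy there are at most $k$ of them. Hence $d^{+}_{D}(v_j) \le k$ for every $j$, so the maximum outdegree $d_a$ of this acyclic orientation satisfies $d_a \le k$.

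Finally I would invoke Lemma 1: since $G$ has an acyclic orientation $D$ with maximum outdegree $d_a \le k$, we get $AT(G) \le d_a + 1 \le k+1$, which is the desired conclusion.

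There is essentially no obstacle here — the lemma is a direct corollary of Lemma 1 once the right orientation is written down; the only thing to be careful about is the direction convention (orienting edges backward along the ordering, not forward, so that the bounded-degree side of the degeneracy condition becomes the outdegree side). If one preferred, the same conclusion could be reached directly via the remark preceding Lemma 1 — an acyclic digraph has no odd Eulerian subdigraph, hence $\mathrm{diff}(D) = |\mathcal{E}_e(D)| \ge 1 \neq 0$, so $D$ is an $AT$-orientation with outdegree $\le k$ — but routing through Lemma 1 keeps the argument shortest.
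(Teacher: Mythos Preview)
Your proposal is correct and matches the paper's proof essentially line for line: the paper also takes a degeneracy ordering, orients each edge toward its earlier endpoint to obtain an acyclic orientation with maximum outdegree at most $k$, and then invokes the acyclic-orientation lemma (Lemma~2.1) to conclude $AT(G)\le k+1$. Your extra remarks justifying acyclicity and the outdegree bound are more detailed than the paper's one-line assertion, but the argument is the same.
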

\begin{proof}
Suppose that $G$ has degeneracy $k$ and $\sigma$ is a vertex ordering which witnesses this. By orienting each edge toward its
endpoint that appears earlier in  the vertex ordering, we can get an acyclic orientation with maximum outdegree $k$. By Lemma 2.1, $AT(G)\leq k+1.$
\end{proof}
%
%

Let $H=T\cup C_{n}$, where $C_{n}$ is the cycle $v_{1}v_{2}\ldots v_{n}v_{1}$. Then every vertex of $V(C_{n})$ is adjacent to exactly one vertex in $V(H)\setminus V(C_{n})$, and every edge of $E(C_{n})$ is adjacent to exactly two edge in $E(H)\setminus E(C_{n})$. An inner vertex $u$ of a Halin graph $H$ is called $special$ if it is a neighbor of a unique inner vertex. Let $v_{1}, v_{2}, \ldots, v_{k}$ be the neighbors of $u$ on $C_{n}$. If a Halin graph $H$ is not a wheel, then $\{u, v_{1}, v_{2}, \ldots, v_{k}\}$ induces a fan and $H$ contains at least two special inner vertices [13].

\section{Proof of the main theorem}
The proof will be completed by a sequence of lemmas.
\begin{lem} {[14]}
 Every Halin graph is 3-degenerate.
\end{lem}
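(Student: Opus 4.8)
The plan is to prove the lemma by induction on $|V(H)|$, showing directly that $H$ can be reduced to the empty graph by successively deleting vertices of degree at most $3$ (equivalently, building a degeneracy ordering in reverse). Two elementary facts will be used throughout: (i) every outer vertex of $H$ has degree exactly $3$ (two neighbours on $C_n$ and one neighbour in $T$), while every inner vertex has degree at least $3$; and (ii) being $3$-degenerate is a hereditary property that is moreover preserved under subdividing an edge, since a newly created degree-$2$ vertex may always be deleted first. The inductive mechanism is to strip off, all at once, the fan hanging from a special inner vertex.

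For the base cases, if $H$ is a wheel with hub $h$ and rim $c_1c_2\cdots c_m$, delete $c_1$ (degree $3$), then $c_2,\dots,c_m$ (each of degree $2$ at the moment of its deletion, its surviving neighbours being $h$ and the next rim vertex), and finally $h$; hence every wheel, and in particular every Halin graph with a single inner vertex, is $3$-degenerate, and the Halin graphs on at most four vertices are trivially so. For the inductive step, suppose $H$ is not a wheel. By the structural fact recalled above, $T$ then has at least two inner vertices and $H$ has a special inner vertex $u$ whose neighbours on $C_n$ are consecutive vertices $v_1,\dots,v_k$ with $k\ge 2$ (inducing, with $u$, a fan) and whose unique inner neighbour is $w$. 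I delete $v_1$, which has degree $3$ in $H$, then $v_2,\dots,v_k$ in order, each of degree $2$ when it is deleted, and then $u$, which by now has degree $1$. Let $R=H-\{u,v_1,\dots,v_k\}$; in the tree $T-\{u,v_1,\dots,v_k\}$ only $w$ has changed degree, dropping from $\deg_T(w)\ge 3$ to $\deg_T(w)-1\ge 2$. Consequently $R$ is obtained from a Halin graph $H'$ with strictly fewer vertices than $H$ by deleting one cycle edge and, when $\deg_T(w)=3$, subdividing one edge with the vertex $w$; by the induction hypothesis $H'$ is $3$-degenerate, hence so is $R$ by (ii), and prepending the deletions $v_1,\dots,v_k,u$ exhibits $H$ itself as $3$-degenerate.

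The one genuinely fiddly part --- the step I expect to cost the most care --- is the bookkeeping around the parent $w$ of the stripped fan. When $\deg_T(w)=3$ one must check that suppressing the resulting degree-$2$ vertex $w$ leaves a plane tree that still has no degree-$2$ vertex and still carries a vertex of degree at least $3$, so that $H'$ really is a smaller Halin graph (possibly a wheel); and one must isolate the degenerate configurations --- when $C_n$ has fewer than three vertices outside $\{v_1,\dots,v_k\}$, or when the reduced tree collapses to a path --- in which $R$ has very few vertices and the base case applies directly. Apart from this, the argument is just the routine verification that no vertex deleted along the prescribed order ever has degree exceeding $3$.
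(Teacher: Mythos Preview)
The paper does not prove this lemma; it is quoted from Wang and Lih~[14] and used as a black box to derive $AT(H)\le 4$. So there is no argument in the paper to compare yours against.

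Your inductive reduction is correct and is essentially the standard proof. Stripping the fan at a special inner vertex $u$ and recognising the remainder $R$ as a subgraph of (possibly a one-vertex subdivision of) a smaller Halin graph $H'$ works exactly as you describe; the two closure properties you record in~(ii), hereditary under subgraphs and preserved under edge subdivision, are precisely what carries $3$-degeneracy from $H'$ to $R$. The boundary configuration you flag is real but unique: for a non-wheel one always has $n-k\ge 2$, and $n-k=2$ forces $T'$ to be the three-vertex path $v_{k+1}\,w\,v_n$ (so $H$ has exactly two inner vertices and $\deg_T(w)=3$), in which case $R$ is a triangle and no smaller $H'$ is needed. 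With that single case named explicitly, the induction closes cleanly.
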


By Lemma 3.1 and Lemma 2.2, we have
\begin{lem}
 $AT(H)\leq 4$ for each Halin graph $H$.
\end{lem}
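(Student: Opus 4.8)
The plan is to obtain Lemma 3.2 immediately from the two facts just recorded. Lemma 3.1 states that every Halin graph is $3$-degenerate, and Lemma 2.2 states that a $k$-degenerate graph $G$ satisfies $AT(G)\le k+1$. Specializing Lemma 2.2 to $k=3$ and invoking Lemma 3.1 gives $AT(H)\le 4$ for every Halin graph $H$. So at this level there is nothing more to do: the content has already been discharged in the preliminaries, and this lemma is a bookkeeping step that packages the crude degeneracy bound for later comparison with the sharper values in the Main Theorem.

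For completeness I would also spell out the witnessing ordering behind the degeneracy claim, in case a self-contained argument is wanted. Write $H=T\cup C_n$ with $C_n=v_1v_2\cdots v_nv_1$. Each outer vertex $v_i$ has degree exactly $3$ in $H$ (its two cyclic neighbours on $C_n$ and its unique tree-neighbour), so one may delete $v_1$ first, then $v_2$ (which by then has degree $2$, having lost $v_1$), and continue around the cycle, each successive outer vertex having degree at most $2$ when it is removed. After $v_1,\dots,v_n$ are gone, the remainder is $T$ with its leaves deleted, a forest, hence $1$-degenerate; finish by listing its vertices so that each precedes its parent. Reading this deletion sequence in reverse yields a vertex ordering in which every vertex has at most $3$ earlier neighbours; orienting each edge toward its earlier endpoint then produces an acyclic orientation of maximum outdegree at most $3$, and Lemma 2.1 gives $AT(H)\le 4$.

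I do not expect any real obstacle in this lemma — it is essentially immediate. The genuine difficulty of the paper lies further on: establishing the matching lower bound $AT(H)\ge 4$ for wheels of even order, and improving the upper bound to $AT(H)\le 3$ in all other cases, both of which will require building explicit $AT$-orientations that exploit the fan structure around special inner vertices rather than the blunt $3$-degeneracy ordering used here.
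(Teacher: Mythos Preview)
Your proposal is correct and matches the paper's own argument exactly: the paper simply states that Lemma 3.2 follows from Lemma 3.1 (Halin graphs are $3$-degenerate) together with Lemma 2.2 ($k$-degenerate implies $AT\le k+1$). Your additional paragraph spelling out an explicit $3$-degeneracy ordering is extra detail not present in the paper (which cites [14] for Lemma 3.1), but it is sound and does not deviate from the intended route.
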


\begin{lem}
If $H$ is a wheel of even order, then $AT(H)=4$.
\end{lem}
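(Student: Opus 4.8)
The upper bound $AT(H)\le 4$ is already given by Lemma 3.2, so the entire content of this lemma is the lower bound $AT(H)\ge 4$. A wheel of even order is $W_n = C_n \cup \{u\}$ where $n$ is odd (so that $|V(H)| = n+1$ is even), with $u$ joined to every vertex of the odd cycle $C_n = v_1v_2\cdots v_n v_1$. Note $W_n$ contains $C_{n+1}$-type odd cycles and is in fact not $3$-choosable; indeed $AT$ is an upper bound for $ch$, and since $\chi(W_n) = 4$ for odd $n$, we have $AT(W_n)\ge ch(W_n)\ge \chi(W_n) = 4$. That observation actually finishes the proof immediately, but I suspect the authors want a self-contained combinatorial argument at the level of orientations, so I describe that route as well.

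The plan for the orientation argument is to show that \emph{every} orientation $D$ of $W_n$ with maximum outdegree at most $2$ has $\mathrm{diff}(D) = 0$. First I would count edges: $|E(W_n)| = 2n$, and with all outdegrees at most $2$ on $n+1$ vertices the total outdegree is at most $2(n+1) = 2n+2$; since it must equal $2n$, all but (at most) two vertices have outdegree exactly $2$, and the ``deficiency'' is exactly $2$ distributed among the vertices. In particular the hub $u$ has outdegree $2$ unless it is one of the (at most two) deficient vertices. I would then analyze $\mathcal{E}(D)$ directly using the ring structure: an Eulerian subdigraph restricted to the rim $C_n$ is a union of directed arcs, and the chords $uv_i$ come in a limited supply at $u$. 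The key structural point is that because $u$ has outdegree $\le 2$ and indegree $\le n-2$ (and total degree $n$ at $u$), the Eulerian subdigraphs using chords at $u$ are severely constrained, and the rim contributes essentially the oriented odd cycle $C_n$ and its reverse plus products of $2$-cycles; a parity/cancellation count should force $|\mathcal{E}_e(D)| = |\mathcal{E}_o(D)|$.

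The main obstacle is the bookkeeping in the orientation argument: one must handle where the outdegree deficiency sits (hub vs. rim, and whether the two deficient slots coincide at one vertex or are split), and for each case show the even and odd Eulerian subdigraphs pair up. I would organize this by first treating the ``generic'' case where $u$ has outdegree exactly $2$ — here the two out-chords from $u$ and the directed structure on the odd cycle $C_n$ give a clean pairing — and then the degenerate cases where $u$ has outdegree $1$ or $0$, which are simpler because fewer Eulerian subdigraphs can pass through $u$. Honestly, though, the cleanest and most robust route is the choosability one: cite that $ch(W_n) \ge \chi(W_n) = 4$ for odd $n$ (so $|V(W_n)|$ even) together with the inequality $ch(G) \le AT(G)$ recalled in the introduction, which gives $AT(H) \ge 4$, and combined with Lemma 3.2 yields $AT(H) = 4$. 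I would present that as the proof and relegate the orientation-counting discussion to a remark.
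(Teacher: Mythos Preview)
Your proposal is correct and matches the paper's approach exactly: the paper's entire proof is the one-line observation that $\chi(H)=4$ forces $AT(H)\ge 4$, combined with Lemma~3.2 for the upper bound. Your suspicion that the authors wanted a self-contained orientation-counting argument is unfounded---they do not give one---so the extra discussion about analyzing all outdegree-$\le 2$ orientations is unnecessary (and, as you note, the bookkeeping there is not fully worked out).
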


\begin{proof}
By Lemma 3.2, we know that $AT(H)\leq 4$. $H$ has chromatic number 4, so $AT(H)\geq 4$. Hence $AT(H)= 4$.

%
%
\end{proof}


\begin{figure}[htbp]
\centering
\includegraphics[height=4.1cm, width=0.5\textwidth]{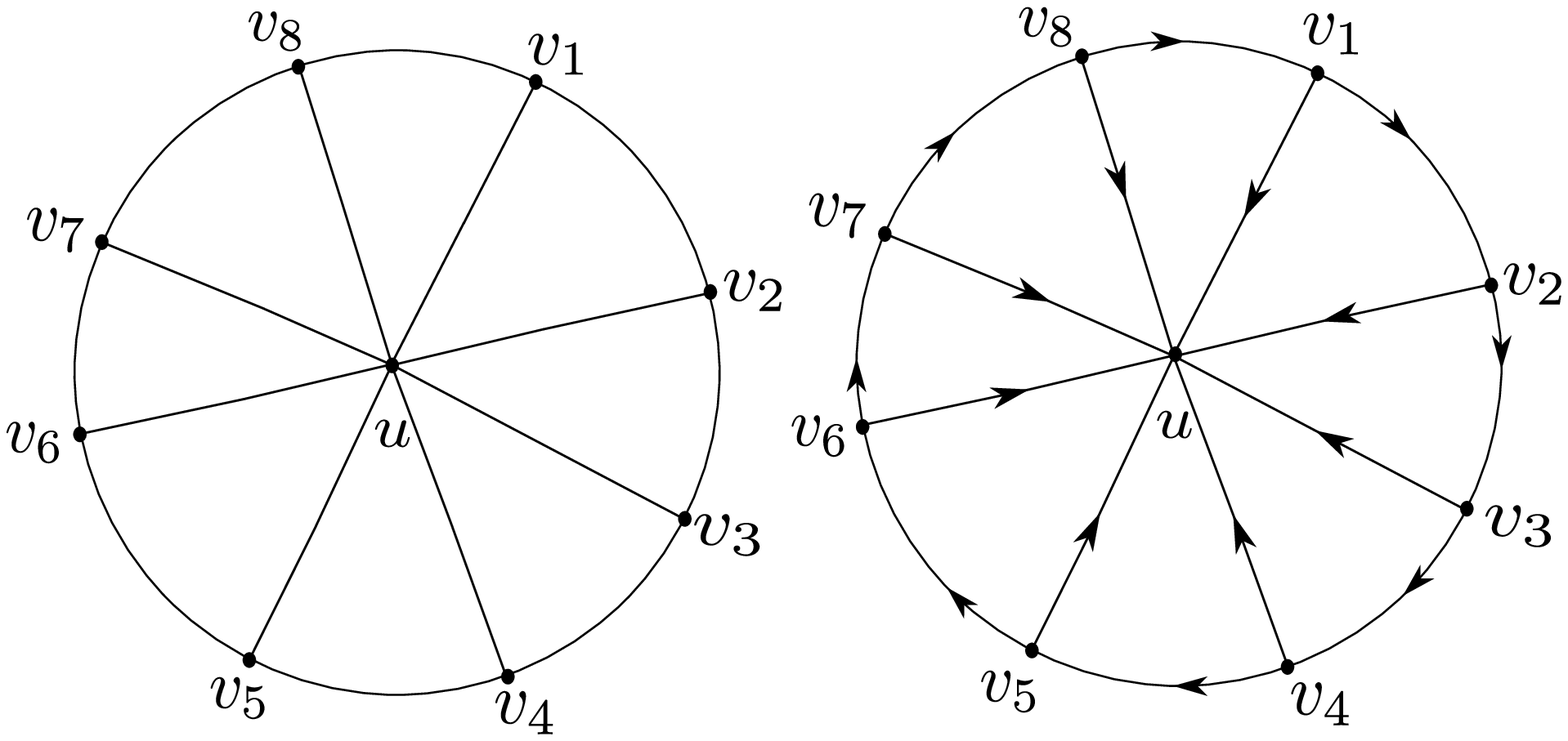}
\centerline{Fig.1  ~The graph $H=C_{8}\cup u$.}
\end{figure}

\begin{lem}
Let $H$ be a Halin graph with an even outer cycle, then $AT(H)=3$.
\end{lem}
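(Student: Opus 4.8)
The plan is to prove $AT(H)=3$ for any Halin graph $H$ with an even outer cycle $C_n$ by exhibiting an $AT$-orientation $D$ of $H$ with maximum outdegree $2$; then $AT(H)\le 3$ by the definition of the Alon-Tarsi number, and $AT(H)\ge 3$ holds because $H$ contains a cycle, so $\chi(H)\ge 3$ and hence $AT(H)\ge\chi(H)\ge 3$. The bulk of the work is the construction of $D$. I would argue by induction on the number of inner vertices of $H$. In the base case $H$ is a wheel $W_n$ with $n$ even, i.e.\ a fan plus one more spoke; since $n$ is even, the even cycle $C_n$ plus the central vertex admits an acyclic orientation of maximum outdegree $2$ — orient $C_n$ consistently except reverse at one designated outer vertex so that the two arcs incident with the hub at that ``source'' both point outward, and orient the spokes suitably — or, even simpler, exploit that $W_n$ with $n$ even is $3$-colourable and build the orientation from a proper $3$-colouring of $W_n$ (orient every edge from the larger colour class toward the smaller, using colours $\{1,2,3\}$, giving an acyclic orientation whose maximum outdegree one then checks to be $2$). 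I would keep the base case flexible: the real point is only that a wheel of \emph{even} order has an $AT$-orientation with max outdegree $2$, which is immediate from $3$-colourability (acyclic orientation along a proper $3$-colouring has max outdegree at most $\deg-$something; here one must verify the hub).

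For the inductive step I would use the structural fact quoted after Definition in Section~2: if $H$ is not a wheel, it has a special inner vertex $u$ whose outer neighbours $v_1,\dots,v_k$ together with $u$ induce a fan $F$, with $v_1,\dots,v_k$ consecutive on $C_n$ and $u$ adjacent to exactly one other inner vertex $w$. Contract this fan: form $H'$ by deleting $v_1,\dots,v_k$ and $u$ and inserting a new outer edge between the two outer vertices $v_0,v_{k+1}$ that were the neighbours on $C_n$ of $v_1$ and $v_k$ respectively, making $w$ a pendant vertex of the underlying tree joined to that new edge — so $H'$ is again a Halin graph, with one fewer inner vertex. The parity of the outer cycle changes by $k-1$ when we replace the path $v_0v_1\cdots v_kv_{k+1}$ (length $k+1$) by the single edge $v_0v_{k+1}$, so $|C(H')| = n - k$. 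Here is the first real obstacle: $n-k$ need not be even, so the inductive hypothesis ``even outer cycle'' is too weak to close the induction. I would therefore strengthen the statement being proved by induction to: \emph{every Halin graph has an $AT$-orientation with maximum outdegree $2$, unless it is a wheel of odd order}, (equivalently $AT(H)=3$ unless $H$ is an odd wheel) — this is consistent with the Main Theorem, and I expect Lemma~3.5 as stated to be an immediate corollary. With this stronger hypothesis, $H'$ (having fewer inner vertices, and being an odd wheel only in a controlled sub-case one handles directly) gives an orientation $D'$ with max outdegree $2$, and the task becomes to extend $D'$ across the fan $F$ without creating an odd directed cycle and without raising any outdegree above $2$.

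To extend, I note that a fan on $u,v_1,\dots,v_k$ is easily oriented acyclically with every vertex having outdegree $\le 2$: e.g.\ orient $v_1\to v_2\to\cdots\to v_k$ along the path, orient each spoke $v_i\to u$ except $v_1\to u$ reversed to $u\to v_1$, giving $u$ outdegree $1$ inside $F$ and each $v_i$ outdegree $\le 2$ inside $F$. The delicate bookkeeping is at the three ``interface'' vertices $v_0$, $v_{k+1}$, $w$: in $D'$ the (now-deleted) edge $v_0v_{k+1}$ carried some orientation and contributed $1$ to the outdegree of one of its ends, and $w$ in $D'$ had some outdegree $\le 2$; when we restore $v_0v_1,\; v_kv_{k+1}$ and the spoke at $u$ toward $w$ (or from $w$), we must re-use exactly the ``outdegree budget'' freed by removing $v_0v_{k+1}$, and we must orient the $u$–$w$ edge consistently with whatever $w$ can absorb. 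I would organize this into a short case analysis on the orientation of $v_0v_{k+1}$ in $D'$ (two cases) and on whether $w$ already has outdegree $2$ in $D'$, reversing the ``free'' end of the fan path and the spoke at $u$ as needed so that $v_0,v_{k+1}$ each gain at most the one unit they lost and $w$ gains at most what it can take; acyclicity is preserved because the fan is oriented acyclically and the only new arcs joining $F$ to the rest, namely at $v_0,v_{k+1},w$, can always be oriented to point ``into'' or ``out of'' $F$ in a way that meshes with the acyclic $D'$ — concretely, contract $F$ back to a point in $D$ and observe the result is $D'$ with at most one extra arc at $w$, which is acyclic. The main obstacle, then, is precisely this interface analysis — showing that the freed outdegree at $v_0$ and $v_{k+1}$ plus the slack at $w$ is always enough to absorb the restored edges — and I expect it to require splitting the fan case $k=1$ (a single triangle-like attachment) from $k\ge 2$, and possibly a separate direct construction when $H'$ would be an odd wheel (i.e.\ when $H$ is a ``double fan'' built from an odd wheel), handled by orienting $H$ from scratch along a proper $3$-colouring, which exists since $H$ here has an even outer cycle.
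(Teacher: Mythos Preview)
Your plan has a genuine gap at its foundation: you repeatedly aim for an \emph{acyclic} orientation of $H$ with maximum outdegree $2$ (via a $3$-colouring in the base case, and you argue ``acyclicity is preserved'' in the inductive step), but no such orientation exists for any Halin graph. Every vertex of a Halin graph has degree at least $3$ (outer vertices have degree exactly $3$; inner vertices have degree $\ge 3$ by the definition of $T$), so $H$ is not $2$-degenerate and hence admits no acyclic orientation with maximum outdegree at most $2$. Concretely, your $3$-colouring construction for the even wheel already fails: with the hub coloured $1$ and the rim alternating $2,3$, every rim vertex coloured $3$ has all three neighbours coloured $1$ or $2$ and therefore receives outdegree $3$, not $2$. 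The alternative ``reverse one rim edge'' sketch runs into the same obstruction.

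What is actually needed is an orientation that may contain directed cycles, but only even ones. The paper does this directly, with no induction and no fan contraction: orient the even outer cycle $C_n$ consistently as a single directed cycle, and orient every edge of $T$ from the leaf side toward the interior (first all arcs from $V(C_n)$ into the adjacent inner vertices, then iteratively from the current leaves of the remaining subtree inward, until at most one edge is left, oriented arbitrarily). Each outer vertex then has outdegree exactly $2$ and each inner vertex outdegree at most $1$; moreover the only directed cycle in the whole digraph is $C_n$ itself, since once an arc enters the tree it can only move strictly inward and never returns to $C_n$. Because $n$ is even, there are no odd directed cycles, hence no odd Eulerian subdigraphs, and the orientation is Alon--Tarsi with maximum outdegree $2$. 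Your inductive machinery, fan contraction, interface case analysis, and strengthened hypothesis all become unnecessary once one drops the unattainable acyclicity requirement and allows the single even directed cycle $C_n$.
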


\begin{proof}
Assume $H=T\cup C_n$, $n$ is even. We know that $AT(H)\geq 3$  since $\chi (H)=3$. It remains to show that $AT(H)\leq 3$. Consider the following two cases.

{\textbf{Case 1.}}\quad $H$ is a wheel with even outer vertices.


Since $H$ is a wheel, $H$ contains exactly one interior vertex $u$, $d(u)=n$, and $d(v_{i})=3$ for $i=1, 2, \ldots, n$.
Let $D$ be an orientation of $H$ in which the edges of $H$ are oriented in such a way by orientating the outer cycle $C_n$ in clockwise and orientating edge $v_iu$ as $(v_{i}, u), i=1,2,\ldots, n$ [See Figure 1]. $D$ has no odd directed cycle, so $D$ has no odd Eulerian subgraph and hence $D$ is an $AT$-orientation with maximum outdegree 2. Therefore $AT(H)\leq 3$.


{\textbf{Case 2.}}\quad $H$ is not a wheel with even outer vertices.

Similarly, orient $C_{n}$ in clockwise. For $T$, let $X$ be the set of all the inner vertices that are adjacent to $V(C_{n})$. All the arcs between $V(C_{n})$ and $X$ are oriented from $V(C_{n})$ to $X$. The unoriented edges of $T$ induce a subgraph, denoted by $T_{1}$. It is easy to see that $T_{1}$ has at least two leaves.
Let $L_{1}$ and $X_{1}$ be the set of all the leaves of $T_{1}$ and all the vertices of $T_{1}$ that are adjacent to the leaves, respectively.
All the arcs between $L_{1}$ and $X_{1}$ are oriented from $L_{1}$ to $X_{1}$. The unoriented edges of $T_{1}$ induce a subgraph, denoted by $T_{2}$.
Let $L_{2}$ and $X_{2}$ be the set of all the leaves of $T_{2}$ and all the vertices of $T_{2}$ that are adjacent to the leaves, respectively.
All the arcs between $L_{2}$ and $X_{2}$ are oriented from $L_{2}$ to $X_{2}$.

\begin{figure}[htbp]
\centering
\includegraphics[height=4.0cm, width=0.5\textwidth]{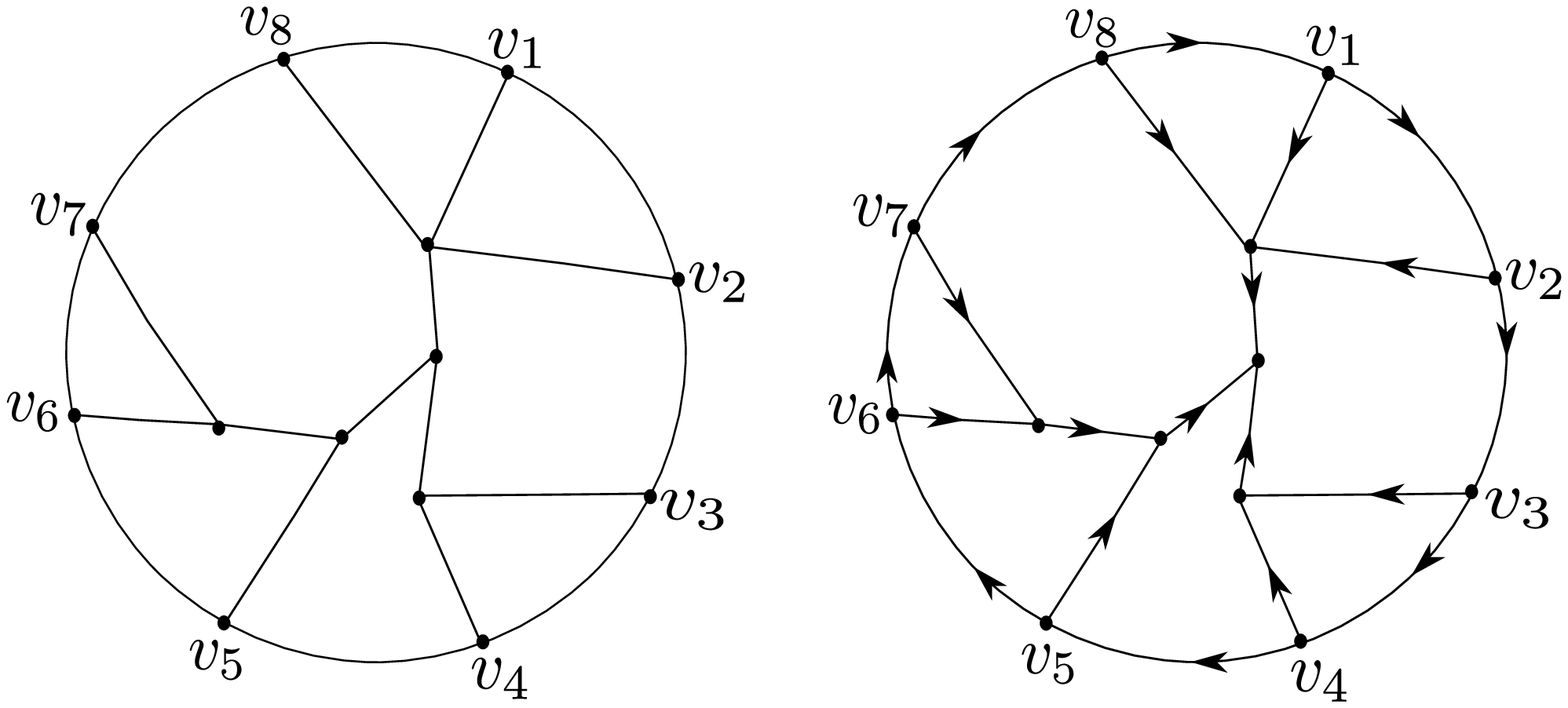}
\centerline{Fig.2  ~$H$ is not a wheel and have even outer vertices.}
\end{figure}

Repeat this process until all edges of $H$ are oriented or only one edge left unoriented, then the edge is oriented arbitrarily. Obviously $D$ has no odd directed cycle, so $D$ has no odd Eulerian subdigraph [See Figure 2]. It is easy to see that the followings hold:
(1) $D$ is an $AT$-orientation,
(2) $d^{+}_{D}(v)\leq1$ for each inner vertex $v$,
(3) $d^{+}_{D}(v)=2$ for each outer vertex $v$.
Hence $AT(H)\leq 3$.  \\
\end{proof}

\begin{lem}{[16]}
Assume that $D$ is a digraph and $V(D)=X_{1} \cup X_{2} .$ For $i=1,2$, let $D_{i}=D\left[X_{i}\right]$ be the subdigraph of $D$ induced by $X_{i} .$ If all the arcs between $X_{1}$ and $X_{2}$ are from $X_{1}$ to $X_{2}$, then $D$ is Alon-Tarsi if and only if $D_{1}, D_{2}$ are both Alon-Tarsi.
\end{lem}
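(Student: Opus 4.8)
The plan is to establish the multiplicative identity ${\rm diff}(D)={\rm diff}(D_{1})\cdot{\rm diff}(D_{2})$, from which the stated equivalence is immediate, since a product of integers vanishes exactly when one of the factors does. (We may assume $X_{1}$ and $X_{2}$ partition $V(D)$, which is the setting in which the lemma is applied.)

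The crucial first step is to show that \emph{no} Eulerian subdigraph of $D$ uses an arc of the cut between $X_{1}$ and $X_{2}$. Let $H\in\mathcal{E}(D)$; recall $V(H)=V(D)$. Consider $\sum_{v\in X_{1}}\big(d^{+}_{H}(v)-d^{-}_{H}(v)\big)$, which is $0$ since every term is $0$. Every arc of $H$ with both ends in $X_{1}$ is counted once in $\sum_{v\in X_{1}}d^{+}_{H}(v)$ and once in $\sum_{v\in X_{1}}d^{-}_{H}(v)$, so it cancels in the difference; what remains is (number of arcs of $H$ from $X_{1}$ to $X_{2}$) $-$ (number of arcs of $H$ from $X_{2}$ to $X_{1}$). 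By hypothesis the second count is $0$, hence so is the first, and therefore $E(H)\subseteq E(D_{1})\sqcup E(D_{2})$.

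Next I would set up the bijection $\mathcal{E}(D)\to\mathcal{E}(D_{1})\times\mathcal{E}(D_{2})$. Given $H\in\mathcal{E}(D)$, let $H_{i}$ be the subdigraph of $D_{i}$ with vertex set $X_{i}$ and edge set $E(H)\cap E(D_{i})$; since the balance condition is vertex-local and no cut arcs occur, each $H_{i}\in\mathcal{E}(D_{i})$. Conversely, given $(H_{1},H_{2})$ with $H_{i}\in\mathcal{E}(D_{i})$, the digraph $H_{1}\cup H_{2}$ has vertex set $V(D)$ and lies in $\mathcal{E}(D)$, again because the balance condition at each vertex is unaffected by the (edgeless) other part. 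These maps are mutually inverse, and by the disjointness above $|E(H)|=|E(H_{1})|+|E(H_{2})|$, so the number of edges is additive across the decomposition. Hence
\[
{\rm diff}(D)=\sum_{H\in\mathcal{E}(D)}(-1)^{|E(H)|}=\Big(\sum_{H_{1}\in\mathcal{E}(D_{1})}(-1)^{|E(H_{1})|}\Big)\Big(\sum_{H_{2}\in\mathcal{E}(D_{2})}(-1)^{|E(H_{2})|}\Big)={\rm diff}(D_{1})\cdot{\rm diff}(D_{2}),
\]
and the lemma follows.

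I do not expect a genuine obstacle; the only steps needing care are the summation-of-balances argument that forces the cut to be unused, and the bookkeeping that Eulerian subdigraphs are required to span all of $V(D)$, so that the splitting into, and recombination from, $D_{1}$ and $D_{2}$ are well defined. Everything else is routine.
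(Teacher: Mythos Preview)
The paper does not supply its own proof of this lemma; it simply cites it from reference [16]. Your argument is correct and is exactly the standard proof: the summation-of-balances step forces every Eulerian subdigraph to avoid the cut, after which the bijection $\mathcal{E}(D)\cong\mathcal{E}(D_{1})\times\mathcal{E}(D_{2})$ and the additivity of $|E(H)|$ give ${\rm diff}(D)={\rm diff}(D_{1})\cdot{\rm diff}(D_{2})$, from which the equivalence follows.
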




\begin{lem}
Let $H$ be a Halin graph with an  odd outer cycle but not a wheel. Then $AT(H)=3$.
\end{lem}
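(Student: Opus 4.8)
First, $AT(H)\ge 3$ is immediate: a Halin graph that is not a wheel of even order satisfies $\chi(H)=3$ (the classification of $\chi$ for Halin graphs, already used in the proof of Lemma 3.4), and $\chi(H)\le AT(H)$ always holds. So the whole content is the bound $AT(H)\le 3$, i.e.\ producing an $AT$-orientation with maximum outdegree $2$. The plan is an induction on $|V(H)|$ in which one special fan is contracted at each step and the splitting Lemma~3.5 is used to reassemble an orientation; to make the induction close I would actually prove the slightly stronger statement that \emph{for every prescribed outer vertex $z$ there is an $AT$-orientation of $H$ with maximum outdegree $\le 2$ in which $z$ is a sink}, since the recursion will need to prescribe the vertex created by a contraction.

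Here is the inductive step. As $H$ is not a wheel it has a special inner vertex $u$; write its fan as $\{u,v_1,\dots ,v_k\}$ with $v_1,\dots ,v_k$ consecutive on $C_n$, let $v_0,v_{k+1}$ be the $C_n$-neighbours of $v_1,v_k$ lying outside the fan, and let $p$ be the unique inner neighbour of $u$. Because $H$ carries a second, disjoint special fan we have $k\le n-2$, so $v_0,v_{k+1},p$ are three distinct vertices. Contract the fan to a single vertex $x$; then $x$ has exactly the neighbours $v_0,v_{k+1},p$, so $H'$ is again a Halin graph, with outer cycle of length $n-k+1<n$. Choose $u$ so that $H'$ is not a wheel of even order: if $H$ has at least three inner vertices this holds for every special $u$ (the contraction deletes only $u$ from the set of inner vertices); if $H$ has exactly two inner vertices — necessarily both special — contract the one whose complementary fan has odd size, which turns $H'$ into a wheel with even outer cycle (here one uses that $n$ is odd, so exactly one of the two fans is even). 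Now apply the strengthened statement to $H'$ with prescribed vertex $x$: this is the inductive hypothesis when the cycle of $H'$ is odd and $H'$ is not a wheel, Lemma~3.4 when that cycle is even, and the wheel case otherwise. In the base instances the required orientation is obtained by a small modification of the constructions already used: reverse the outer cycle so that $x$ becomes its unique sink and direct $x$'s spoke inward; one checks the modified orientation is still $AT$ (when $H'$ is a wheel this uses the parity of its cycle exactly as in Lemma~3.4). This yields $D'$ on $H'$ with maximum outdegree $\le 2$ and $x$ a sink.

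To reverse the contraction, orient $F:=\{u,v_1,\dots ,v_k\}$ acyclically with maximum outdegree $\le 2$ (a fan is $2$-degenerate, so Lemma~2.1 applies), and orient the three edges of $H$ incident to $F$ as $v_0\to v_1$, $v_{k+1}\to v_k$, $p\to u$. Since $x$ was a sink in $D'$, all three point from $H-F$ into $F$, so Lemma~3.5 applies with $X_2=F$: the combined orientation $D$ of $H$ is $AT$ iff $D[F]$ and $D[H-F]$ both are. But $D[F]$ is acyclic, hence $AT$, and $D[H-F]$ is precisely $D'-x$, which is $AT$ because deleting a sink changes neither the family of Eulerian subdigraphs nor ${\rm diff}$. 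Finally $d^+_D(w)\le 2$ for all $w$: inside $F$ this is clear, and each of $v_0,v_{k+1},p$ had an edge oriented toward the sink $x$ in $D'$, hence outdegree $\le 1$ in $D'-x$, leaving room for its one new edge into $F$. Thus $AT(H)\le 3$. When one additionally wants $z$ a sink in $D$: if $z$ lies in a special fan, contract that fan and make $z$ a sink already in the acyclic orientation of $F$; otherwise carry the demand on $z$ over to $H'$ as its prescribed vertex.

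The step I expect to be the genuine obstacle is this last bookkeeping with the prescribed vertex: one must be able to choose the contracted fan so that its removal is compatible with the demand on $z$, the awkward configuration being when $z$ belongs to a run of outer vertices whose common inner neighbour is \emph{not} special, so that two sink-demands ($z$ and the new vertex $x$) would have to be pushed into $H'$ simultaneously — the strengthened hypothesis, or a separate treatment of that configuration, has to be arranged to cope with it. Alongside this, one has to verify the strengthened forms of the wheel case and of Lemma~3.4 and check the smallest Halin graphs by hand.
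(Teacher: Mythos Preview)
Your approach is genuinely different from the paper's, and the gap you yourself flag is real and unresolved. The paper does \emph{not} induct: it performs a single split of $H$ into the fan $G_1=\{u,v_1,\dots ,v_k\}$ around one special inner vertex $u$ and the complement $G_2=H-V(G_1)$, gives explicit orientations of each piece, and then applies Lemma~3.5 once. In Case~1 ($k$ odd) the fan $G_1$ is oriented so that every directed cycle contains the single arc $(u,v_1)$, whence the Eulerian subdigraphs are exactly the empty graph and the $k-1$ individual cycles, giving ${\rm diff}(D_1)=1$; $G_2$ is oriented acyclically; and the three crossing edges are sent from $G_1$ to $G_2$. In Case~2 (every special fan even) the roles are reversed: $G_1$ is oriented acyclically, while $G_2$ is oriented by directing the remaining arc of $C_n$ one way and directing every tree edge along the unique $T$-path toward $v_n$; then all directed cycles of $D_2$ share the arc $(w,v_n)$, there are $n-k-1$ of them (even, since $n$ is odd and $k$ even), so $|\mathcal E(D_2)|$ is odd and ${\rm diff}(D_2)\ne 0$. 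The three crossing edges now go from $G_2$ to $G_1$. In both cases the outdegree bound $2$ is immediate from the construction.

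Your inductive scheme, by contrast, needs the strengthened hypothesis ``$AT$-orientation with a prescribed outer sink'', and the recursion forces the contracted vertex $x$ to be that sink in $H'$. When the externally prescribed $z$ does not lie in the fan you contract, you must carry $z$ into $H'$ as well, and then $H'$ needs \emph{two} prescribed sinks simultaneously; your one-sink hypothesis does not deliver this, and you have not shown how to avoid or absorb the second demand. The ``small modification'' you invoke for the base cases is also not as small as stated: for a wheel with even outer cycle, orienting $C_n$ as a directed cycle gives every outer vertex outdegree $\ge 1$, so no outer vertex is a sink; breaking the cycle into two directed paths toward $z$ creates an outer vertex of outdegree $3$ unless you also reroute a spoke, and then you must redo the Eulerian count. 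None of this is insurmountable, but it is exactly the bookkeeping the paper's one-shot construction sidesteps by never needing a prescribed sink at all.
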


\begin{proof}
Suppose that $H=T\cup C_{n}$, where $n$ is odd. $H$ has chromatic number 3, so we know that $AT(H)\geq 3$. It remains to show that $AT(H)\leq 3$. Similar to above lemma, we consider two cases.

{\textbf{Case 1.}}\quad $H$ has a special inner vertex $u$ which is adjacent to an odd number of vertices of $C_{n}$.

Let $v_{1}, v_{2}, \ldots, v_{k}$ be the neighbors of $u$ on $C_{n}$ and $k$ is odd.  Then $\{u, v_{1}, v_{2}, \ldots, v_{k}\}$ induces a fan, denoted by  $G_{1}$. Assume $G_{2}=H-V(G_{1})$. The subgraph $G_{1}$ has an orientation $D_1$ as the following way: orient the edge $v_iv_{i+1}$ as $(v_{i},v_{i+1})$, for $1\leq i\leq k-1$, the edge $uv_1$ as $(u, v_{1})$, and the edge $v_{j} u$ as $(v_{j}, u)$ for $j=2, 3, \cdots k$.
Observe that $uv_{1}v_{2}\ldots v_{i}u$ is an odd directed cycle when $i$ is even, and $uv_{1}v_{2}\ldots v_{i}u$ is an even directed cycle when $i$ is odd, for $2\leq i\leq k$. Hence $D_{1}$ contains $k-1$ directed cycles.
Specifically $D_{1}$ contains $\frac{k-1}{2}$ odd directed cycles and $\frac{k-1}{2}$ even directed cycles. It is easy to see that the arc $(u,v_{1})$ is contained in all directed cycles. Since Eulerian subdigraph is the arc disjoint union of directed cycles and empty subdigraph is an even Eulerian subdigraph, $D_{1}$ has $\frac{k-1}{2}$ odd Eulerian subdigraphs and $\frac{k+1}{2}$ even Eulerian subdigraphs. diff$(D_{1})=|\mathcal{E}_{e}(D_{1})|-|\mathcal{E}_{o}(D_{1})|=1 \neq 0$. Therefore $D_{1}$ is an $AT$-orientation of $G_{1}$.

Denote $X=\{v_{k+1},v_{k+2},\ldots, v_{n}\}$, and let $X_{1}$ be the set of all the vertices in $V(G_{2})-X$ that are adjacent to $X$.
$G_{2}$ has an orientation $D_{2}$ that for each $k+1\leq i\leq n-1$, orient edge $v_{i}v_{i+1}$ as $(v_{i}, v_{i+1})$.
All the arcs between $X$ and $X_{1}$ are oriented from $X$ to $X_{1}$. The unoriented edges of $G_{2}$ induce a subgraph, denoted by $T_{1}$.
Let $L_{1}$ and $X_{2}$ be the set of all the leaves of $T_{1}$ and all the vertices of $T_{1}$ that are adjacent to the leaves, respectively.
All the  arcs between $L_{1}$ and $X_{2}$ are oriented from $L_{1}$ to $X_{2}$. The unoriented edges of $T_{1}$ induce a subgraph, denoted by $T_{2}$.
Let $L_{2}$ and $X_{3}$ be the set of all the leaves of $T_{2}$ and all the vertices of $T_{2}$ that are adjacent to the leaves, respectively.
All the arcs between $L_{2}$ and $X_{3}$ are oriented from $L_{2}$ to $X_{3}$.
Repeat this process until all edges of $G_{2}$ are oriented or only one edge left unoriented, then the edge is oriented arbitrarily. It is obvious that $D_{2}$ is an acyclic orientation, hence diff$(D_{2})=|\mathcal{E}_{e}(D_{2})|-|\mathcal{E}_{o}(D_{2})|\neq 0$, $D_{2}$ is an $AT$-orientation of $G_{2}$.

\begin{figure}[htbp]
\centering
\includegraphics[height=4.2cm, width=0.5\textwidth]{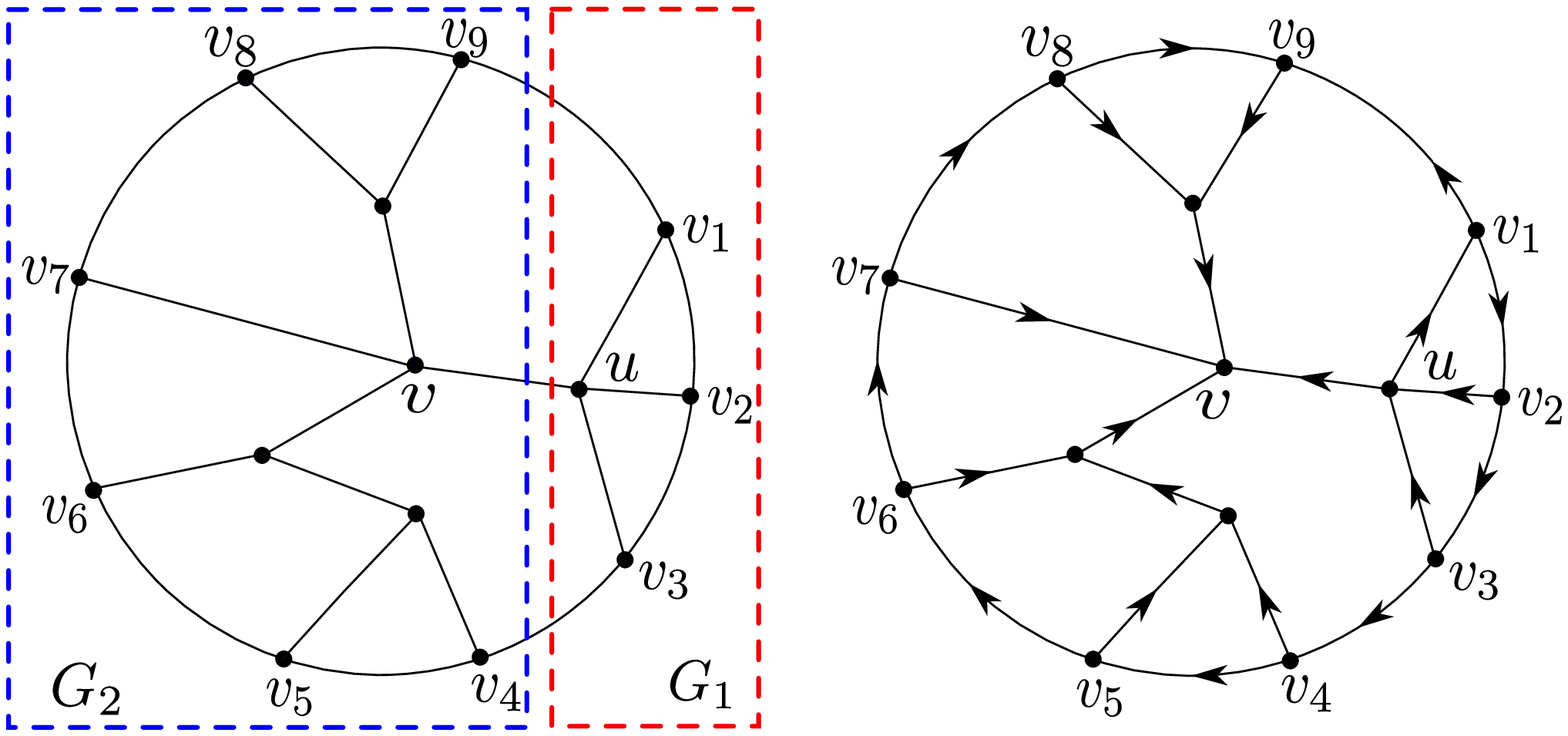}
\centerline{Fig.3  ~The Halin graph $H$ for $n=9$ and $k=3$.}
\end{figure}

Let $v$ be the unique inner vertex which is adjacent to $u$, $u\in V(G_{1})$ and $v\in V(G_{2})$. Let $D$ be obtained from $D_{1}\cup D_{2}$ by adding arcs $(u, v)$, $(v_{1}, v_{n})$ and $(v_{k}, v_{k+1})$. Such that all the arcs between $G_{1}$ and $G_{2}$ are oriented from $G_{1}$ to $G_{2}$ [See Figure 3]. By Lemma 3.5, $D$ is an $AT$-orientation. It is easy to see that $d^{+}_{D}(v)\leq 2$ for every $v\in V(G)$.
Hence $AT(H)\leq 3$.


{\textbf{Case 2.}}\quad All special inner vertices of $H$ are adjacent to an even number of vertices of $C_{n}$.

Let $u$ be a special inner vertex, $v_{1}, v_{2}, \ldots, v_{k}$ be the neighbors of $u$ on $C_{n}$. $\{u, v_{1}, v_{2}, \ldots, v_{k}\}$ induces a fan, denoted by $G_{1}$. Let $G_{2}=H-V(G_{1})$. The subgraph $G_{1}$ has an orientation $D_{1}$ as the following way: orient the edge $v_{i-1}v_{i}$ as $(v_{i}, v_{i-1})$ for $2\leq i\leq k$, and the edge $v_{j}u$ as $(v_{j}, u)$ for $j=1, 2, \ldots k$.
Observed that $D_{1}$ is an acyclic orientation, so $D_{1}$ is an $AT$-orientation of $G_{1}$.

In the tree $T$, any two vertices are connected by exactly one path, so there is a unique $xv_{n}$-path in $T$ connecting $x$ and $v_{n}$, where $x\in V(G_{2})\setminus v_{n}$.
$G_{2}$ has an orientation $D_{2}$ that for $k+2\leq i\leq n$, orient the edge $v_{i-1}v_{i}$ as $(v_{i}, v_{i-1})$, and let every $xv_{n}$-path be a directed path from $x$ to $v_{n}$. It is obvious that all the edges of $G_{2}$ are oriented. Denote $w$ is the unique vertex in $T$ which is adjacent to $v_{n}$, the arc $(w, v_{n})$ is contained in all $xv_{n}$-directed paths. In $G_{2}$, all directed circles are made up of $v_{i}v_{n}$-directed path in $T$ and $v_{n}v_{i}$-directed path in $C_{n}$, for $k+1\leq i\leq n-1$. $D_{2}$ has an even number of directed cycles. Empty subdigraph is an even Eulerian subdigraph of $D_{2}$, hence $|\mathcal{E}(D_{2})|$ is odd. diff$(D_{2})=|\mathcal{E}_{e}(D_{2})|-|\mathcal{E}_{o}(D_{2})|\neq 0$, so $D_{2}$ is an $AT$-orientation of $G_{2}$.

\begin{figure}[htbp]
\centering
\includegraphics[height=4.4cm, width=0.5\textwidth]{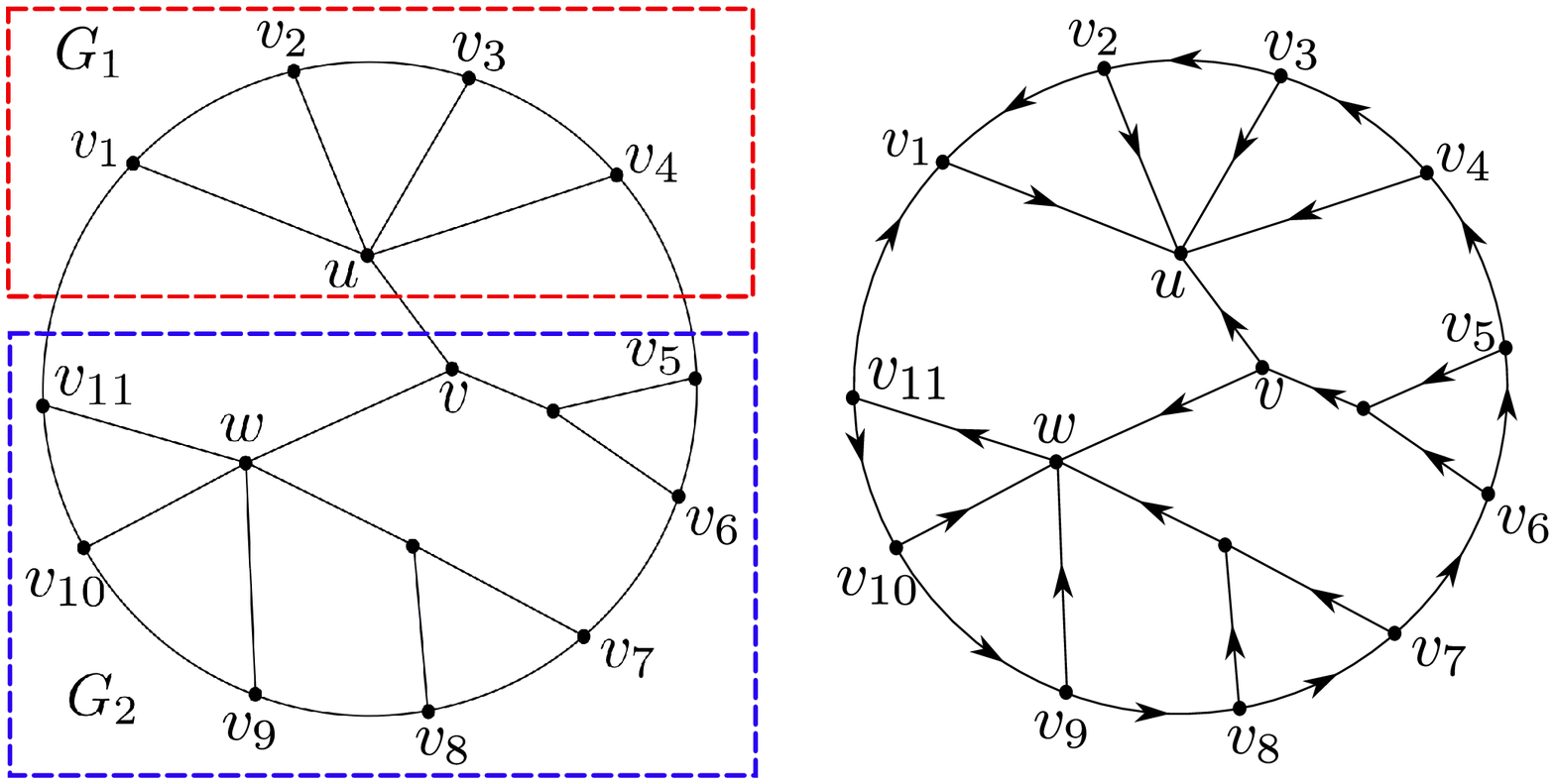}
\centerline{Fig.4  ~The Halin graph $H$ for $n=11$ and $k=4$.}
\end{figure}

Let $v$ be the unique inner vertex which is adjacent to $u$, $u\in V(G_{1})$ and $v\in V(G_{2})$. Let $D$ be obtained from $D_{1}\cup D_{2}$ by adding arcs $(v, u)$, $(v_{n}, v_{1})$ and $(v_{k+1}, v_{k})$. Obviously all the arcs between $G_{1}$ and $G_{2}$ are oriented from $G_{2}$ to $G_{1}$ [See Figure 4]. In a similar way as case 1, we can get  $AT(H)\leq 3$.
%

\end{proof}

\begin{coro}
For a $Halin$ $graph$ $H$, we have
$$\chi(H)=ch(H)=\chi_{P}(H)=AT(H)=\left\{\begin{array}{l}
4,\ \mbox{\rm if}\ H\ {\rm is\ a\ wheel\ of\ even\ order};\\
3,\ \mbox{\rm otherwise}.\\
\end{array} \right.$$
\end{coro}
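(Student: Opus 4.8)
The plan is to deduce the Corollary from the Main Theorem (which is itself established by Lemmas~3.2--3.6) together with the chain $\chi(G)\le ch(G)\le \chi_{P}(G)\le AT(G)$, valid for every graph $G$ by the results of Schauz recalled in the Introduction. Once I show that the two ends of this chain take the same value for every Halin graph, all four parameters must coincide.

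First I would pin down $AT(H)$ by collecting the orientation lemmas. Lemma~3.2 gives $AT(H)\le 4$ for every Halin graph. Lemma~3.3 gives $AT(H)=4$ when $H$ is a wheel of even order, Lemma~3.4 gives $AT(H)=3$ when $H$ has an even outer cycle (in particular when $H$ is a wheel of odd order), and Lemma~3.6 gives $AT(H)=3$ when $H$ is a non-wheel Halin graph with an odd outer cycle. I would then check that these cases exhaust all Halin graphs: a wheel of even order has an odd outer cycle and is handled by Lemma~3.3; any other Halin graph is either not a wheel---and then falls under Lemma~3.4 or Lemma~3.6 according to the parity of its outer cycle---or is a wheel of odd order, hence has an even outer cycle and falls under Lemma~3.4. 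Thus $AT(H)=4$ if $H$ is a wheel of even order and $AT(H)=3$ otherwise, which supplies the upper bound for $\chi_{P}(H)$, $ch(H)$ and $\chi(H)$ in each case.

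Next I would supply matching lower bounds on $\chi(H)$. Every Halin graph contains a triangle: if $H$ is a wheel, its unique inner vertex together with two consecutive vertices of $C_{n}$ forms one; if $H$ is not a wheel, a special inner vertex $u$ (which exists by [13]) has at least two outer neighbours $v_{1},\dots ,v_{k}$, consecutive on $C_{n}$ because $d(u)\ge 3$ and one neighbour of $u$ is inner, so $u,v_{1},v_{2}$ form a triangle since $v_{1}v_{2}\in E(C_{n})$. Hence $\chi(H)\ge 3$ always. If in addition $H$ is a wheel of even order, its outer cycle is odd, so it needs three colours, and the central vertex is adjacent to all of them, forcing a fourth; thus $\chi(H)\ge 4$ in that case. (Both statements are also contained in the determination of $\chi(H)$ in [13].) Combining these lower bounds with the upper bounds of the previous paragraph gives $\chi(H)=AT(H)$ with the asserted common value, and then the sandwich inequality forces $ch(H)=\chi_{P}(H)=\chi(H)=AT(H)$.

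Since all the real content sits in Lemmas~3.2--3.6 and in the classical value of $\chi(H)$, the only obstacle in assembling the Corollary is bookkeeping: making sure that the three orientation lemmas genuinely partition the class of Halin graphs, and that the phrases ``wheel of even order'', ``odd outer cycle'', and ``the $\chi=4$ exception'' single out the same family. Once that correspondence is spelled out, the Corollary follows at once.
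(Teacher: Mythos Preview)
Your proposal is correct and matches the paper's approach: the Corollary is stated without proof, being an immediate consequence of the Main Theorem (assembled from Lemmas~3.2--3.6), the inequality chain $\chi\le ch\le\chi_{P}\le AT$ recalled in the Introduction, and the values of $\chi(H)$ from~[13] (which the paper already invokes inside the proofs of Lemmas~3.3, 3.4 and~3.6). Your extra verification that the orientation lemmas partition the class of Halin graphs and your direct argument for $\chi(H)\ge 3$ via a triangle are helpful explicit checks, but they do not change the route.
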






\begin{thebibliography}{99}


%

\bibitem{BS}
Alon N, Tarsi M. Colorings and orientations of graphs[J]. Combinatorica, 1992, 12(2): 125-134.

\bibitem{EN}
Jensen T R, Toft B. Graph coloring problems[M], Wiley, New York, 1995.







\bibitem{EN}
Hefetz D. On two generalizations of the Alon-Tarsi polynomial method[J]. Journal of Combinaotrial Theory Series B, 2011, 101: 403-414.

\bibitem{EN}
Zhu X D. On-line list coloring of graphs[J]. The Electronic Journal of Combinatorics, 2009, 16(1): 3665-3677.

\bibitem{EN}
Schauz U. A paintability version of the Combinatorial Nullstellensatz and list colorings of k-partite k-uniform hypergraphs[J]. The Electronic Journal of Combinatorics, 2010, 17(1): 3940-3946.

\bibitem{EN}
Thomassen C. Every planar graph is 5-choosable[J]. Journal of Combinatorial Theory Series B, 1994, 62(1): 180-181.

\bibitem{EN}
Schauz U. Mr. Paint and Mrs. Correct[J]. The Electronic Journal of Combinatorics, 2009, 16(1): R77, 1-18.

\bibitem{EN}
Zhu X D. The Alon-Tarsi number of planar graphs[J]. Journal of Combinatorial Theory Series B, 2019, 134: 354-358.

\bibitem{EN}
Grytczuk J, Zhu X D. The Alon-Tarsi number of a planar graph minus a matching[J]. Journal of Combinatorial Theory Series B, 2020, 145: 511-520.

\bibitem{EN}
Eaton N, Hull T. Defective List Colorings of Planar Graphs[J]. Bull.inst.combin.appl, 1999, 25(25):79-87.

\bibitem{EN}
Li Z G, Shao Z L, Petrov F, et al. The Alon-Tarsi number of a toroidal grid[J]. arXiv preprint arXiv:1912.12466, 2019.

\bibitem{EN}
Abe T, Kim S J, Ozeki K. The Alon-Tarsi number of $ K_5 $-minor-free graphs[J]. arXiv preprint arXiv:1911.04067, 2019.

\bibitem{EN}
Li H X, Zhang Z F, Zhang J X. On the colouring of Halin graphs[J]. Journal of Shanghai Institute of Railway Technology, 1994, 15(1): 19-24.

\bibitem{EN}
Wang W F, Lih K W. List coloring Halin graphs[J]. Ars Combinatoria, 2005, 77: 53-63.


\bibitem{EN}
Cai L Z, Zhu X D. Game chromatic index of k-degenerate graphs[J]. Journal of Graph Theory, 2000, 36(3): 144-155.

\bibitem{EN}
Zhu X D, Balakrishnan R. Combinatorial Nullstellensatz: With Applications to Graph Colouring[M]. Chapman and Hall/CRC, 2021.





\end{thebibliography}
\end{document}